\newtheorem{theorem}{Theorem}[section]
\newtheorem{coro}{Corollary}[section]
\newtheorem{conj}{Conjecture}[section]
\theoremstyle{definition}
\newtheorem{defin}{Definition}[section]
\begin{document}



\title{
Emptying Sets: The Cookie Monster Problem
}

\author{
Megan Belzner
\vspace{0.5in}\\
under the direction of\\
Wuttisak Trongsiriwat\\
Massachusetts Institute of Technology
\vspace{1in}
}


\date{
Research Science Institute\\
\rsifinalpaperdate
}

\begin{singlespace}
\maketitle
\end{singlespace}

\begin{abstract}

Given a set of integers $S = \{k_1,\ k_2,\ \ldots,\ k_n\}$, the Cookie Monster Problem is the problem of making all elements of the set equal 0 in the minimum number of moves. Consider the analogy of cookie jars with distinct numbers of cookies, such that $k_i$ is the number of cookies in the $i$th jar. The ``Cookie Monster'' wants to eat all the cookies, but at each move he must choose some subset of the jars and eat the same amount from each jar. The \emph{Cookie Monster Number of $S$}, $CM(S)$, is the minimum number of such moves necessary to empty the jars. It has been shown previously that $\lceil \log_2(|S|+1) \rceil \leq CM(S) \leq |S|$. In this paper we classify sets by determining what conditions are necessary for $CM(S)$ to equal 2 or 3 and what effect certain restrictions have on $CM(S)$. We also provide an alternative interpretation of the problem in the form of a combinatorial game and analyze the losing positions.

\vspace{1in}
\begin{center}\textbf{Summary}\end{center}

Given some number of cookie jars, each with a distinct number of cookies, the Cookie Monster Problem asks us to empty the jars in the fewest number of moves possible. One move consists of choosing some number of the jars and taking the same number of cookies from each. In this paper we examine how different restrictions on the set of cookie jars affect the minimum number of moves necessary to empty the jars.

\end{abstract}


\section{Introduction}

The \emph{Cookie Monster Problem} (CMP) is about emptying a given set in the fewest number of moves---that is, making every element in the set equal zero. First proposed as a simple puzzle in 2002 in the book \emph{The Inquisitive Problem Solver} \cite{quiz}, it has since been analyzed and expanded by Michael Cavers \cite{lecture}. The initial formulation presents a set of $n = 15$ cookie jars with $i$ cookies in the $i$th jar. The ``Cookie Monster'' wants to eat all of the cookies, but he has to do so in a series of \emph{moves}. If one move consists of taking some subset of the jars and eating the same number of cookies from each jar, the CMP asks how to empty all jars in fewer than five steps. In this case, the optimal solution consists of removing 8 cookies from every jar with at least that many, then 4 cookies, then 2, then 1. This solution can be represented as $\langle 8,\ 4,\ 2,\ 1\rangle$. After each step, jars with the same number of cookies can be treated as the same jar, since depleting the jars at different rates does not help \cite{quiz}.

A set of cookie jars, $S$, can be represented in the form $S = \{k_1,\ k_2,\ \ldots,\ k_n\}$ where $k_i$ is the number of cookies in the $i$th jar and each consecutive step is represented similarly with all equal elements treated as one element and all zeros dropped. Arrows ($\rightarrow$) between sets denote a single move.
$$\{15,\ 14,\ \ldots,\ 2,\ 1\} \rightarrow \{7,\ 6,\ \ldots,\ 2,\ 1\} \rightarrow \{3,\ 2,\ 1\} \rightarrow \{1\} \rightarrow \{\}.$$

Several variations on the greedy algorithm, which seeks to maximize a given parameter, have been proposed, though none of them give the optimal solution in all cases \cite{quiz}:

\begin{itemize}
\item The \emph{Empty the Most Jars Algorithm} (EMJA) suggests that one should reduce the functional number of jars as much as possible each move.
\begin{itemize}
\item Given $\{15,\ 13,\ 12,\ 4,\ 2,\ 1\}$, the first move could be to take 11 cookies from each of the first three jars, leaving the set $\{4,\ 2,\ 1\}$. Thus, in effect, three jars have been emptied.
\end{itemize}
\item The \emph{Take the Most Cookies Algorithm} (TMCA) takes as many cookies as possible at each step.
\begin{itemize}
\item With $\{15,\ 13,\ 12,\ 4,\ 2,\ 1\}$, the first step would be to take 12 cookies from each of the first three jars, leaving $\{4,\ 3,\ 2,\ 1\}$.
\end{itemize}
\item The \emph{Binary Algorithm} (BA) finds $x$ as large as possible and takes $2^x$ cookies from each jar that contains at least that many.
\begin{itemize}
\item From $\{15,\ 13,\ 12,\ 4,\ 2,\ 1\}$, the first step would take $2^3 = 8$ cookies from the first three jars, leaving $\{7,\ 5,\ 4,\ 2,\ 1\}$.
\end{itemize}
\end{itemize}

Section \ref{found} begins with definitions and an overview of previous work on this problem. In Section \ref{clsets}, all sets where $|S| = 3$ or $CM(S) = 3$ are classified, and some additional bounds are proposed dependent on certain conditions. Section \ref{spec} examines the properties of $CM(S)$ for arithmetic and geometric sequences and the Fibonacci sequence. Finally, Section \ref{game} provides a version of the problem as a combinatorial game and analyzes properties of the losing positions of a special case of this game.

\section{Foundations}\label{found}

The original CMP has been formalized and expanded by Cavers \cite{lecture}.

\begin{defin}\label{subs}
Given a set $S = \{k_1,\ k_2,\ \ldots,\ k_n\}$ of distinct integers, the \emph{Cookie Monster Number of S}, $CM(S)$, is the minimum number of moves required to make all elements of $S$ equal zero. Alternatively, given a multiset $A = \langle a_1,\ a_2,\ \ldots,\ a_m\rangle$, let the set $A^+$ be the set of the sums of all possible subsets of the elements in $A$. Then, for a set $S\subseteq A^+$, $A$ provides a series of numbers to remove from $S$ to empty the set $S$ \cite{lecture}. Thus, $CM(S)$ is equal to the size of the smallest multiset $A$ such that $S\subseteq A^+$ \cite{lecture}.
\end{defin}

For example, let $S = \{13,\ 10,\ 7,\ 6\}$ and suppose that $A = \langle 7,\ 3,\ 3\rangle$. Then, $A^+ = \{a_1+a_2+a_3,\ a_1+a_2,\ a_1,\ a_2+a_3,\ a_2\} = \{13,\ 10,\ 7,\ 6,\ 3\}$ which satisfies $S\subseteq A^+$. Thus, one way to empty the set $S$ first removes 7 from $k_1,k_2,k_3$, then 3 from $k_1,k_2,k_4$, then 3 from the remaining elements $k_1,k_4$:
$$\{13,\ 10,\ 7,\ 6\} \rightarrow \{6,\ 3\} \rightarrow \{3\} \rightarrow \{\}.$$

This formulation provides upper and lower bounds on the value of $CM(S)$ for a set of size $n$, first suggested and proven by Cavers \cite{lecture}.

\begin{theorem}\label{bounds}
Given a set $S = \{k_1,\ k_2,\ \ldots,\ k_n\}$:
$$\lceil \log_2(n+1)\rceil \leq CM(S) \leq n.$$
\end{theorem}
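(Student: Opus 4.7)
The plan is to prove the two bounds separately, relying entirely on the characterization of $CM(S)$ from Definition \ref{subs} as the size of the smallest multiset $A$ with $S \subseteq A^+$.

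For the upper bound, I would exhibit an explicit multiset of size $n$ that works. The simplest choice is $A = \langle k_1, k_2, \ldots, k_n \rangle$ itself: each $k_i$ appears as a singleton subset sum, so every element of $S$ lies in $A^+$. Operationally, this corresponds to the naive strategy of emptying one jar per move, which is obviously a legal play and uses exactly $n$ moves. Hence $CM(S) \leq n$.

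For the lower bound, I would argue by counting distinct subset sums. Suppose $CM(S) = k$ and let $A = \langle a_1, \ldots, a_k\rangle$ be a witnessing multiset, so $S \subseteq A^+$. The multiset $A$ has exactly $2^k$ subsets (including the empty subset), so
\[
|A^+| \leq 2^k.
\]
Since the elements of $S$ are distinct and nonzero (we may assume the $k_i$ are positive, as zero entries would be trivially already emptied), the empty subset, whose sum is $0$, cannot account for any element of $S$. Therefore
\[
n = |S| \leq |A^+ \setminus \{0\}| \leq 2^k - 1.
\]
Solving for $k$ gives $k \geq \log_2(n+1)$, and since $k$ is a nonnegative integer, $k \geq \lceil \log_2(n+1)\rceil$.

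Neither direction looks genuinely hard: the upper bound is an explicit construction and the lower bound is a one-line pigeonhole-style counting argument on subset sums. The only subtlety to be careful about is the handling of the empty subset in the definition of $A^+$ — if the convention includes $0 \in A^+$, the $-1$ in $2^k - 1$ must be justified by the fact that the elements of $S$ are taken to be positive (equivalently, any jar already at $0$ contributes nothing to $CM(S)$ and can be discarded from $S$ without loss of generality). Making this convention explicit at the outset is the cleanest way to keep the inequality sharp and recover the ceiling of $\log_2(n+1)$ rather than $\log_2 n$.
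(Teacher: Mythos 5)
Your proposal is correct and follows essentially the same route as the paper: the upper bound via the ``empty one jar per move'' multiset $\langle k_1,\ldots,k_n\rangle$, and the lower bound via counting the at most $2^m-1$ nonempty subset sums of a minimal witnessing multiset, giving $n \leq 2^m - 1$ and hence $m \geq \lceil \log_2(n+1)\rceil$. Your explicit remark about excluding the empty subset (sum $0$) just makes precise the convention the paper uses implicitly when it counts $\sum_{i=1}^m \binom{m}{i} = 2^m - 1$ entries.
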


\begin{proof}
The upper bound is taken from the algorithm ``\emph{at step $i$ empty jar $i$}'', which empties any set in $n$ steps \cite{lecture,blog}.

For the lower bound, consider a multiset $A = \langle a_1,\ a_2,\ \ldots,\ a_m\rangle$ such that $A$ is the minimum-size multiset for which $S\subseteq A^+$. It then follows that $CM(S) = m$. The set $A^+$ has at most $\sum_{i=1}^m {m \choose i} = 2^m-1$ entries, and thus, $n \leq 2^m-1$. Then, $m \geq \log_2(n+1)$ and the result follows \cite{lecture}.
\end{proof}

There has been work towards characterizing sets for which $CM(S)$ is equal to the upper or lower bounds given in Theorem \ref{bounds}. Several such cases are known: $CM(S)$ for any set $S$ which is an arithmetic sequence of the form $k_i = ai$ such as $S = \{n,\ldots,2,1\}$ is exactly equal to the lower bound, and $CM(S)$ for any set $S$ which is a geometric sequence such as $S = \{2^{n-1},\ \ldots,\ 2^1,\ 2^0\}$ is exactly equal to the upper bound. However, general characterizations do not currently exist for sets of size $n \geq 4$.

\section{Classifying Sets}\label{clsets}

Although $CM(S)$ is known for many types of sets, such as arithmetic and geometric progressions (as mentioned above), little work has been completed towards classifying $CM(S)$ in general. The simplest non-trivial case, $|S| = 3$, is analyzed, and it gives insight into sets where $CM(S) = 3$. Additionally, some general rules can be stated about sets and their properties.

\subsection{Sets of Size Three}

For a set $S$ of size 3, the upper and lower bounds on $CM(S)$ are established by Theorem \ref{bounds} to be 3 and 2, respectively. $CM(S)$ only equals 2 if certain conditions are satisfied.

\begin{theorem}\label{three}
Consider a set $S = \{k_1,\ k_2,\ k_3\}$ such that $0 < k_1 < k_2 < k_3$. Then, $CM(S) = 2$ if and only if $k_3 = k_1 + k_2$.
\end{theorem}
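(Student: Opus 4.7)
The plan is to work directly from the multiset reformulation of $CM$ given in Definition \ref{subs}: $CM(S)=2$ exactly when there exists a multiset $A=\langle a_1,a_2\rangle$ of size two with $S\subseteq A^+$. Because $A$ has only two elements, its set of nonempty subset sums is $A^+=\{a_1,\,a_2,\,a_1+a_2\}$, which has at most three elements. The analysis then reduces to comparing this three-element candidate with $S$.

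For the forward direction, suppose $CM(S)=2$ and fix a witnessing multiset $A=\langle a_1,a_2\rangle$. Since $|S|=3$ and $|A^+|\le 3$, the containment $S\subseteq A^+$ forces equality $S=A^+$. Assuming without loss of generality that $a_1,a_2>0$ (any nonpositive $a_i$ would either be useless or produce elements not in $S$, since $0<k_1<k_2<k_3$), we have $a_1+a_2$ strictly greater than both $a_1$ and $a_2$. Matching with the ordering $k_1<k_2<k_3$, the largest element of $A^+$ must be $k_3$, so $k_3=a_1+a_2$, while $\{a_1,a_2\}=\{k_1,k_2\}$. Therefore $k_3=k_1+k_2$.

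For the reverse direction, assume $k_3=k_1+k_2$ and take $A=\langle k_1,k_2\rangle$. Then $A^+=\{k_1,\,k_2,\,k_1+k_2\}=S$, so $CM(S)\le 2$. Combined with the lower bound $CM(S)\ge\lceil\log_2(3+1)\rceil=2$ from Theorem \ref{bounds}, this gives $CM(S)=2$.

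The argument is mostly bookkeeping; the only subtle point I anticipate is justifying that the entries of the witnessing multiset $A$ may be taken positive, so that the size comparison $a_1+a_2>a_1,a_2$ pins down which element of $A^+$ equals $k_3$. This follows from the fact that each $k_i>0$ together with the observation that a nonpositive $a_i$ cannot appear in any subset sum that matches a positive element of $S$ while still producing three distinct positive sums. Once this is in place, the theorem follows immediately from the two paragraphs above.
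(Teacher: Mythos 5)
Your proposal is correct and follows essentially the same route as the paper: the key step in both is that $|A^+|\le 3$ forces $S=A^+$ when $CM(S)=2$, and the converse direction exhibits the witness $A=\langle k_1,k_2\rangle$ (the paper phrases this as an explicit two-move emptying, which is the same thing under Definition \ref{subs}). Your extra care about positivity of the $a_i$ and the explicit appeal to the lower bound of Theorem \ref{bounds} are fine refinements of points the paper leaves implicit, not a different argument.
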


\begin{proof}
If $k_3 = k_1 + k_2$, then a procedure for emptying the set in 2 moves consists of removing $k_1$ from $k_1$ and $k_3$, then removing $k_2$ from the remaining element(s):
$$\{k_1,\ k_2,\ k_1 + k_2\} \rightarrow \{k_2\} \rightarrow \{\}.$$

Conversely, suppose that $CM(S) = 2$. By Definition \ref{subs}, there exists a multiset $A$ with $|A| = 2$ and $S \subseteq A^+$. If $A = \langle a_1,\ a_2\rangle$, then the set $A^+ = \{a_1,\ a_2,\ a_1+a_2\}$, and the only way a set $S$ such that $|S| = 3$ can be a subset of $A^+$ is if $S = A^+$. Thus, $k_3 = k_1 + k_2$.
\end{proof}

\subsection{Classification of $S$ with $CM(S) = 3$}\label{class}

Consider the multiset $A = \langle a_1,\ a_2,\ a_3\rangle$. This provides a solution for any set $S$ such that $A$ is a minimum size multiset (that is, $CM(S) = 3$) with $S \subseteq A^+ = \{a_1,\ a_2,\ a_3,\ a_1+a_2,\ a_1+a_3,\ a_2+a_3,\ a_1+a_2+a_3\}$. For any $n$, all possible sets for which $CM(S) = 3$ can be generated by taking every possible subset of $A^+$. Some such sets for $n = 4$ are:

\begin{center}
$\{a_1,\ a_2,\ a_1+a_2,\ a_1+a_3\}$,

$\{a_1,\ a_2,\ a_3,\ a_1+a_2+a_3\}$,

$\{a_1,\ a_3,\ a_1+a_2,\ a_2+a_3\}$,

$\{a_1,\ a_1+a_2,\ a_1+a_3,\ a_2+a_3\}$,

$\{a_1+a_2,\ a_1+a_3,\ a_2+a_3,\ a_1+a_2+a_3\}$.
\end{center}

However, these sets are all given in terms of the elements of $A$. Given a set $S$, $A$ is not yet known, and so alternative representations in terms of the elements of $S$ can be used to find $A$. Examples where each set above is in the form $\{k_1,\ k_2,\ k_3,\ k_4\}$ are given in Table \ref{alt4}---these equalities hold without any knowledge of $A$, and are thus useful for classification. \\

\begin{table}[hbpt]
\begin{center}
\begin{tabular}{|c|c|} \hline
$A$-level set & $S$-level equation \\ \hline
$\{a_1,\ a_2,\ a_1+a_2,\ a_1+a_3\}$ & $k_1 + k_2 = k_3$ \\
$\{a_1,\ a_2,\ a_3,\ a_1+a_2+a_3\}$ & $k_1 + k_2 + k_3 = k_4$ \\
$\{a_1,\ a_3,\ a_1+a_2,\ a_2+a_3\}$ & $k_1 + k_4 = k_2 + k_3$ \\
$\{a_1,\ a_1+a_2,\ a_1+a_3,\ a_2+a_3\}$ & $2k_1 + k_4 = k_2 + k_3$ \\
$\{a_1+a_2,\ a_1+a_3,\ a_2+a_3,\ a_1+a_2+a_3\}$ & $k_1 + k_2 + k_3 = 2k_4$ \\ \hline
\end{tabular}
\caption{Sample equalities for $CM(S) = 3$ where $|S| = 4$}\label{alt4}
\end{center}
\end{table}

In fact, all sets of size 4 where $CM(S) = 3$ are covered under these equalities, as from the full list it can be seen that any other set arises via permutation of $a_1,\ a_2,\ a_3$ or $a_1+a_2,\ a_1+a_3,\ a_2+a_3$ in one of these sets. Other values for $|S|$ can be represented similarly in terms of equations with the elements of $S$. However, while size 4 sets only need one equation in order to cover all possibilities, other sets need systems of equations with the number of equations equal to $|S| - 3$. The full list of equations is given in Appendix \ref{sets}. An exhaustive test can be used to confirm that any set which satisfies the $A$-level representation also satisfies the $S$-level one, and vice versa. Each $S$-level representation is constructed from the $A$-level version and is thus equivalent, and each $S$-level version can be individually shown to be solvable in 3 moves.


\subsection{General Properties}

In addition to the specific sets given above, other rules about the relationship between $CM(S)$ and the set $S$ can be determined regarding any subsets summing to the same value.

\begin{theorem}
Consider a set $S$ and let $n = |S|$. If two disjoint subsets of $S$ sum to the same value, then $CM(S) \leq n - 1$.
\end{theorem}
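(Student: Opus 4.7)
My plan is to construct, for any such $S$, a multiset $A$ with $|A|\le n-1$ and $S\subseteq A^+$; by Definition \ref{subs}, this immediately gives $CM(S)\le n-1$.

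First I would name the data: write the two disjoint equal-sum subsets as $T_1=\{u_1<\cdots<u_p\}$ and $T_2=\{v_1<\cdots<v_q\}$ with common sum $s$, and let $R=S\setminus(T_1\cup T_2)$ hold the remaining $n-p-q$ elements. The elements of $R$ are simply dumped into $A$ as singletons, contributing $n-p-q$ entries. The substantive task is to cover the $p+q$ elements of $T_1\cup T_2$ using only $p+q-1$ further entries of $A$; this is where the equal-sum hypothesis must be leveraged.

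The key construction is a common refinement of the two partial-sum sequences. Consider the positive partial sums $U_i=u_1+\cdots+u_i$ for $1\le i\le p$ and $V_j=v_1+\cdots+v_j$ for $1\le j\le q$. These are $p+q$ values in $(0,s]$, but they include the forced coincidence $U_p=V_q=s$, so together they take at most $p+q-1$ distinct values. Sort these distinct values as $0=p_0<p_1<\cdots<p_\ell$ with $\ell\le p+q-1$, and take $A_0=\{p_k-p_{k-1}:1\le k\le \ell\}$ as a multiset of positive integers. Each $u_i=U_i-U_{i-1}$ is a difference of two members of $\{p_0,p_1,\ldots,p_\ell\}$, so it telescopes into a sum of consecutive gaps and therefore lies in $A_0^+$; the same reasoning applies to each $v_j$. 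Hence $T_1\cup T_2\subseteq A_0^+$.

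Setting $A=A_0\cup R$ as a multiset gives $|A|\le (p+q-1)+(n-p-q)=n-1$, and $S\subseteq A^+$ since every element of $R$ is a singleton of $A$ while every element of $T_1\cup T_2$ is in $A_0^+\subseteq A^+$. The one step that needs real care — and the place where the hypothesis is genuinely used — is the counting argument showing that the coincidence $U_p=V_q$ drops the size of the common-refinement gap multiset from $p+q$ to at most $p+q-1$; everything else is bookkeeping.
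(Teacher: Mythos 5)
Your construction is correct, and it takes a genuinely different route from the paper. The paper argues dynamically by induction on $p+q$: it repeatedly performs the move ``subtract the smaller head $c_1$ from both $b_1$ and $c_1$,'' which kills one element per move and preserves an equal-sum identity, with a separately handled base case $r+s=4$ (and the easy cases $r=1$ or $s=1$). You instead work statically through Definition \ref{subs}: merging the two partial-sum sequences and taking consecutive gaps produces a multiset $A_0$ of size at most $p+q-1$ (the forced coincidence $U_p=V_q=s$ is exactly where the hypothesis enters), every element of $T_1\cup T_2$ telescopes into a subset sum of $A_0$, and the leftover elements of $R$ are adjoined as singletons, giving $|A|\le n-1$ and $S\subseteq A^+$. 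The two arguments are secretly cousins --- the amounts subtracted in the paper's successive moves are essentially your gaps --- but your version has concrete advantages: it needs no base case or induction, it handles $r=1$, $s=1$, and accidental coincidences among partial sums uniformly (extra coincidences only shrink $A_0$), and it sidesteps the mild informality in the paper's inductive step about the reduced ``set'' of size $r+s-1$ possibly containing repeated values. What the paper's version buys in exchange is an explicit sequence of moves rather than an appeal to the equivalence $S\subseteq A^+\Rightarrow CM(S)\le|A|$, though that equivalence is exactly what Definition \ref{subs} licenses, so your use of it is legitimate. One cosmetic remark: like the paper, you should read the hypothesis as requiring the two disjoint subsets to be nonempty, which your indexing $p,q\ge 1$ already implicitly does.
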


\begin{proof}
Suppose $\{b_1,\ \ldots,\ b_r,\ c_1,\ \ldots,\ c_s\} \subseteq S$ and $b_1 + \ldots + b_r = c_1 + \ldots + c_s$. Any other values which are outside of the equality can be ignored, as they will take at most one additional move to remove. If either $r = 1$ or $s = 1$, then $S$ can be emptied in $r + s - 1$ moves in a manner similar to that used in the proof of Theorem \ref{three}. Suppose, however, that $r,s > 1$. The base case of $r + s = 4$ gives the non-trivial possibility of $r = s = 2$, that is, $b_1 + b_2 = c_1 + c_2$. Supposing that $b_1 > c_1$, this equation can be reordered into $c_2 = (b_1 - c_1) + b_2$. Then, there is a way to empty the set in $r + s - 1 = 3$ moves:
$$\{b_1,\ b_2,\ c_1,\ (b_1 - c_1) + b_2\} \xrightarrow{-(b_1 - c_1)} \{b_2,\ c_1\} \xrightarrow{-c_1} \{b_2\} \xrightarrow{-b_2} \{\}.$$

Then, given any $r + s$, the set can be reduced to a set of size $r + s - 1$ in a single step. With any equality $b_1 + \ldots + b_r = c_1 + \ldots + c_s$, the smaller first value can be subtracted from the larger. This difference becomes a single element in a new set of size $r + s - 1$, for which there then exists a similar equality $(b_1 - c_1) + \ldots + b_r = c_2 + \ldots + c_s$. If the set of size $r + s - 1$ is solvable in one fewer steps than its size, so is the set of size $r + s$, and with the base case of $r + s = 4$, the result follows by induction.
\end{proof}

\begin{coro}\label{nosum}
If $CM(S) = n$, then no two disjoint subsets of $S$ sum to the same value.
\end{coro}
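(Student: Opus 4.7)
The statement is the contrapositive of the theorem immediately preceding it, so my plan is to invoke that theorem directly rather than construct an independent argument. Specifically, I would begin by supposing, for the sake of contradiction, that $CM(S) = n$ yet there exist two disjoint subsets of $S$ that sum to the same value. By the previous theorem, the existence of such a pair of disjoint subsets forces $CM(S) \leq n - 1$, which contradicts the hypothesis $CM(S) = n$.

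Since the corollary is a one-step logical consequence (the contrapositive) of a result already established in the paper, there is no real technical obstacle. The only thing to be careful about is matching the hypothesis of the theorem exactly: I need the two subsets to be disjoint and to have equal sum, which is precisely what is being assumed in the contradiction. I would also briefly remark that the ``empty set'' case is not a concern here, since the theorem implicitly uses two nonempty disjoint subsets (otherwise $0 = 0$ would trivially hold for any $S$, making the statement vacuous), consistent with how the original theorem is phrased with sums $b_1 + \cdots + b_r$ and $c_1 + \cdots + c_s$.

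The proof is therefore essentially a single sentence: assume the negation of the conclusion, apply the theorem, obtain $CM(S) \leq n-1$, and contradict $CM(S) = n$. The main substantive content has already been carried out in the proof of the theorem, so the corollary requires no new construction or induction, only the logical observation.
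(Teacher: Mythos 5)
Your proposal is correct and matches the paper's intent exactly: the corollary is stated without proof precisely because it is the contrapositive of the preceding theorem, which is the one-line argument you give. Your aside about excluding empty subsets is a reasonable (and harmless) clarification consistent with how the theorem is phrased.
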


Note that the converse of Corollary \ref{nosum} is not true. For example, the set $\{5,\ 9,\ 12,\ 13\}$ can be emptied in three moves ($\{5,\ 9,\ 12,\ 13\} \rightarrow \{4,\ 8,\ 12\} \rightarrow \{8\} \rightarrow \{\}$) although no two subsets sum to the same value.

\begin{coro}
Consider a set $S$ with $x$ disjoint pairs of subsets that sum to the same value, such as $S = \{b_1,\ \ldots,\ b_r,\ c_1,\ \ldots,\ c_s,\ d_1,\ \ldots,\ d_t,\ e_1,\ \ldots,\ e_u\}$ such that $b_1 + \ldots + b_r = c_1 + \ldots + c_s$ and $d_1 + \ldots + d_t = e_1 + \ldots + e_u$. For $S$ of this form, $CM(S) \leq n - x$.
\end{coro}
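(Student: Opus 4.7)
The plan is to reduce this to $x$ independent applications of the preceding theorem, by induction on $x$. The base case $x=1$ is exactly the preceding theorem, and the base case $x=0$ reduces to the upper bound $CM(S)\le n$ from Theorem~\ref{bounds}. The inductive step is where the disjointness hypothesis does the work.

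The crucial observation I would make at the start of the argument is that the strategy used inside the proof of the preceding theorem is \emph{localized}: every single move it performs only subtracts from the elements of the equal-sum pair $\{b_1,\dots,b_r,c_1,\dots,c_s\}$ itself. One verifies this by inspecting the given reduction — the step sending $\{b_1,b_2,c_1,c_2\}\to\{b_2,c_1\}$ subtracts $b_1-c_1$ only from the two entries $b_1$ and $c_2$, and the inductive step in that proof is of the same form. Consequently, if those $r+s$ elements are part of a larger set $S$, the strategy still empties them in $r+s-1$ moves while leaving the remaining $n-r-s$ elements of $S$ completely untouched.

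Given this, the inductive step runs as follows. Assume the result for $x-1$ pairs and suppose $S$ has size $n$ with $x$ mutually disjoint equal-sum pairs, labeled so that the first pair uses the $r+s$ elements $b_1,\dots,b_r,c_1,\dots,c_s$. Apply the localized strategy to empty those elements in $r+s-1$ moves. The resulting set $S''=S\setminus\{b_1,\dots,b_r,c_1,\dots,c_s\}$ has size $n-r-s$ and, because the remaining $x-1$ pairs were disjoint from the first, still contains all of them intact. By the inductive hypothesis, $CM(S'')\leq (n-r-s)-(x-1)$. Adding the two contributions gives
\[
CM(S)\;\leq\;(r+s-1)+(n-r-s)-(x-1)\;=\;n-x,
\]
as required.

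The main obstacle is not computational but conceptual: one must justify why the moves used on one pair do not disrupt the equal-sum structure of the other pairs. The entire argument rests on the locality observation above, together with the disjointness assumption; once these are stated clearly, the induction is formal and the arithmetic collapses cleanly to $n-x$.
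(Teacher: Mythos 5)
Your proof is correct and matches the argument the paper intends: the corollary is stated without proof precisely because it follows by applying the theorem's strategy to each disjoint equal-sum pair, saving one move per pair, which is exactly what your induction on $x$ formalizes. Your explicit locality observation (that the theorem's moves touch only the jars in the pair being processed) is the right justification for why the pairs can be handled independently.
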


\section{Special Sets}\label{spec}

In addition to the classification of sets as above, certain sequences give interesting values of $CM(S)$.

\subsection{Arithmetic Sequences}

Arithmetic sequences of the form $k_i = yi + z$ are initially interesting as the case where $y = 1$ and $z = 0$ is the simplest example of a set where $CM(S)$ is exactly equal to the lower bound (for any size set). It follows from this that any arithmetic sequence where $z = 0$ has the same property, as common factors make no difference.

However, when $z \neq 0$, $CM(S)$ is not quite as simple. In all cases, the set can be emptied in one more move than the case of $z = 0$ simply by subtracting $z$. However, for some $|S|$ (namely, any power of 2), $CM(S)$ is still equal to the lower bound.

\begin{theorem}
For $S$ with $n$ elements of the form $k_i = yi + z$ where $y,z \neq 0$, $CM(S) = \lceil \log_2n \rceil + 1$.
\end{theorem}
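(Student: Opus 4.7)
My plan is to split into cases depending on whether $n$ is a power of $2$, and match an upper bound with a lower bound in each case.

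\textbf{Upper bound.} In general I would spend one move subtracting $z$ from every jar, reducing to the arithmetic sequence $\{y, 2y, \ldots, ny\}$ whose Cookie Monster number is known to equal $\lceil \log_2(n+1) \rceil$. This yields $CM(S) \le \lceil\log_2(n+1)\rceil + 1$, which coincides with $\lceil\log_2 n\rceil + 1$ exactly when $n$ is not a power of $2$. For $n = 2^k$ this crude shift is one move too many, so I would exhibit directly a multiset of size $k + 1$ whose subset-sum set contains $S$. The natural candidate is
\[
A = \langle y + z,\ y,\ 2y,\ 4y,\ \ldots,\ 2^{k - 1} y\rangle,
\]
because the $2^k$ subsets of $A$ that include the first element have sums $\{(y + z) + jy : 0 \le j \le 2^k - 1\} = S$. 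By Definition \ref{subs} this gives $CM(S) \le k + 1 = \lceil\log_2 n\rceil + 1$.

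\textbf{Lower bound.} When $n = 2^k$, Theorem \ref{bounds} already gives $CM(S) \ge \lceil\log_2(n+1)\rceil = k + 1 = \lceil\log_2 n\rceil + 1$, so nothing further is required. The substantive case is when $n$ is not a power of $2$, because then the general lower bound $\lceil\log_2(n+1)\rceil = \lceil\log_2 n\rceil$ is one smaller than the target, and I must argue that the shift by $z$ genuinely costs a move. I would assume for contradiction that a multiset $A$ of size $m = \lceil\log_2 n\rceil$ has $S \subseteq A^+$, and then exploit the AP structure: the $n$ chosen subset sums must be equally spaced with gap $y$, their minimum is $y + z$ and their maximum is $ny + z$, and the offset $z \neq 0$ should prevent the clean binary representation available when $z = 0$.

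\textbf{Expected obstacle.} The lower bound in the non-power-of-two case is where I expect the real difficulty. A naive counting argument ($|A^+| \le 2^m - 1$) leaves too much slack --- there are several ``spare'' subset sums into which the shift $z$ could potentially be absorbed --- so the proof must truly use the AP structure rather than just cardinalities. I would anticipate needing a careful analysis of how the common difference $y$ appears as a difference of two subset sums of $A$, together with an argument (possibly requiring an auxiliary hypothesis distinguishing $z$ from small combinations of the $a_i$) showing that the minimum element $y + z$ cannot be expressed as a subset sum consistent with the rest of $S$ unless $|A|$ exceeds $\lceil\log_2 n\rceil$.
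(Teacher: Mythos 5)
The upper-bound half of your proposal is correct and is essentially the paper's own argument: subtracting $z$ first reduces to the pure progression and gives $\lceil\log_2(n+1)\rceil+1$, and for $n=2^k$ your multiset $\langle y+z,\ y,\ 2y,\ \ldots,\ 2^{k-1}y\rangle$ is exactly the move sequence the paper obtains by repeatedly folding the top half of the progression onto the bottom half and finishing with $y+z$; the lower bound for $n=2^k$ is just Theorem \ref{bounds}, again as in the paper. But the case you defer --- proving $CM(S)\ge\lceil\log_2 n\rceil+1$ when $n$ is \emph{not} a power of two --- is the only nontrivial content of the theorem, and your proposal contains no argument for it, only a plan to ``exploit the AP structure'' plus a guess that an auxiliary hypothesis may be needed. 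As written, that is a genuine gap rather than a deferred detail.

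Your instinct about the auxiliary hypothesis is, however, exactly right, and it means the gap cannot be closed for the statement as given: when $z$ is a multiple of $y$ the claimed lower bound is false. Take $y=z=1$ and $n=5$, so $S=\{2,3,4,5,6\}$. Then $A=\langle 1,2,3\rangle$ satisfies $A^+=\{1,2,3,4,5,6\}\supseteq S$, so $CM(S)=3$, whereas the theorem asserts $\lceil\log_2 5\rceil+1=4$; explicitly $\{2,3,4,5,6\}\rightarrow\{2,3,5\}\rightarrow\{3\}\rightarrow\{\}$. The same example locates the weak point of the paper's own proof, which claims that beating the crude bound forces the number of distinct nonzero values to be halved at every move: in fact finishing in $m$ moves only requires $n_j\le 2^{m-j}-1$ after move $j$, so from five values it suffices to reach three (not two), and the ``the median cannot be emptied because $z$ is left over'' obstruction disappears after the first move, when the set is no longer a shifted progression. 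A correct statement needs a hypothesis such as $y$ not dividing $z$, and its proof must genuinely use the arithmetic structure rather than the count $|A^+|\le 2^m-1$ --- your parity idea is the right sort of tool (for $y=2$, $z=1$, $n=5$, a three-element multiset has at most four odd subset sums, so $CM\ge 4$ there), so developing that line under the corrected hypothesis is where I would direct your effort.
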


\begin{proof}
For any $n = 2^x$ for some value $x$, $\lceil \log_2n \rceil + 1 = \lceil \log_2(n+1) \rceil$, while when $n \neq 2^x$, $\lceil \log_2n \rceil + 1 = \lceil \log_2(n+1) \rceil + 1$. As stated above, any set such that $k_i = yi + z$ can be emptied in $\lceil \log_2(n+1) \rceil + 1$ moves by removing $z$ from each element at the first step and proceeding as with $z = 0$. Then, for any set to be able to be emptied in one fewer move, the number of elements must be halved at each move ($n_j = \lfloor \frac{n_{j-1}}{2} \rfloor$). Any even number of jars with elements in an arithmetic progression can be halved. For example, supposing $S = \{y+z,\ 2y+z,\ 3y+z,\ 4y+z,\ 5y+z,\ 6y+z\}$, subtracting $3y$ from each of the last three elements will leave three elements remaining. Supposing, however, that $n$ was odd, the minimum number of elements left would be $\lceil \frac{n}{2} \rceil$. The case where $z = 0$ uses the idea of emptying the median element entirely, but this is not possible here as the $z$ will still be left over. The only sets where $n$ is even at every step (except for the last, where there is only one element) are ones where $n = 2^x$, and the result follows.
\end{proof}

\subsection{Geometric Sequences}

Similar to arithmetic sequences being equal to the lower bound, geometric sequences provide the simplest example of sets for which $CM(S) = n$. This is true for any set of the form $k_i = wy^{i-1}$ with $y \geq 2$ and $w > 0$. However, this may no longer hold if some constant is added to each term in the sequence, though the set is also no longer geometric. Any set for which $k_2 > k_1,\ k_3 > k_2 + k_1,\ \cdots,\ k_n > k_{n-1} + \ldots + k_1$ is at the upper bound $CM(S) = n$, so the constant must be able to overcome this. In addition, if $k_1 + k_2 > k_3$, there is no guarantee that $S$ can be emptied in fewer than $n$ moves.

\subsection{The Fibonacci Sequence}

Another sequence which offers an interesting equation for $CM(S)$ is the Fibonacci sequence.

\begin{theorem}
For $S = \{F_2,\ \ldots,\ F_n\}$ where $F_i = F_{i-2} + F_{i-1}$ with $F_0 = 0$ and $F_1 = 1$, $CM(S) = \lceil \frac{n}{2} \rceil$.
\end{theorem}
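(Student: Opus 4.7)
The plan is to prove both directions $CM(S) \leq \lceil n/2 \rceil$ and $CM(S) \geq \lceil n/2 \rceil$ separately, invoking Definition \ref{subs} to translate the claim into one about multiset subset sums.

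For the upper bound I will exhibit an explicit multiset of size $\lceil n/2 \rceil$ witnessing $S \subseteq A^+$. Take
\[
A = \langle F_2,\ F_3,\ F_5,\ F_7,\ \ldots \rangle,
\]
consisting of $F_2$ together with every odd-indexed Fibonacci $F_{2j+1}$ satisfying $3 \leq 2j+1 \leq n$ (terminating at $F_n$ when $n$ is odd and at $F_{n-1}$ when $n$ is even). A direct count gives $|A| = 1 + \lfloor (n-1)/2\rfloor = \lceil n/2 \rceil$. Odd-indexed elements of $S$ appear as singletons of $A$. For the even-indexed elements, the key identity is
\[
F_2 + F_3 + F_5 + F_7 + \cdots + F_{2j-1} = F_{2j},
\]
which follows by an easy induction using $F_{2j} = F_{2j-1} + F_{2j-2}$. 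This shows every $F_{2j} \in S$ is a subset sum of $A$, so $S \subseteq A^+$ and $CM(S) \leq \lceil n/2 \rceil$.

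For the lower bound, I will proceed by strong induction on $n$. The base cases $n \in \{2,3,4\}$ give $CM(S) = 1,2,2$; the case $n=4$ follows from Theorem \ref{three} since $F_4 = F_2+F_3$. For the inductive step, let $A$ be any multiset with $S = \{F_2,\ldots,F_n\} \subseteq A^+$ and set $m = |A|$. The strategy is to produce an element $a \in A$ whose removal still covers the smaller Fibonacci set, i.e., $\{F_2,\ldots,F_{n-2}\} \subseteq (A\setminus\{a\})^+$; applying the inductive hypothesis to $A\setminus\{a\}$ then yields $m-1 \geq \lceil (n-2)/2\rceil = \lceil n/2\rceil - 1$, as required.

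The main obstacle is identifying this removable element. The natural candidate is the largest element $a_m$ of $A$. When $a_m \geq F_{n-1}$, every $F_j$ with $j \leq n-2$ satisfies $F_j < a_m$, so no subset-sum representation of $F_j$ can include $a_m$; hence $\{F_2,\ldots,F_{n-2}\} \subseteq (A\setminus\{a_m\})^+$ is automatic. The delicate case, which I expect to be the principal technical hurdle, is when $a_m < F_{n-1}$. Here one must analyze the subset-sum representations of $F_n$, $F_{n-1}$, and $F_{n-2}$ simultaneously, exploiting the recurrence $F_n = F_{n-1}+F_{n-2}$ to show that $A$ still contains enough structure for a suitable (possibly non-maximal) element to be deleted without destroying coverage of the lower Fibonaccis. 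A likely tool is to compare the characteristic vectors of the chosen subsets $T_n, T_{n-1}, T_{n-2} \subseteq A$ and argue, via the tight interaction between these three sums, that at least one coordinate is used only in $T_{n-1}$ or $T_n$, making its removal harmless for $\{F_2,\ldots,F_{n-2}\}$.
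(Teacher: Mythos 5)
Your upper bound is correct and is, in substance, the paper's own argument recast in static form: the paper pairs off the two largest jars at each move (subtract $F_{i-1}$ from $F_{i-1}$ and $F_i$, leaving $0$ and $F_{i-2}$), while you exhibit the witnessing multiset $A=\langle F_2,\ F_3,\ F_5,\ \ldots\rangle$ directly, using the identity $F_2+F_3+F_5+\cdots+F_{2j-1}=F_{2j}$; both give $|A|=\lceil n/2\rceil$, and your counting is right.

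The lower bound, however, has a genuine gap. Your induction stands or falls on the claim that some $a\in A$ can be deleted while preserving $\{F_2,\ldots,F_{n-2}\}\subseteq(A\setminus\{a\})^+$, and you only establish this when the largest element satisfies $a_m\geq F_{n-1}$. In the remaining case $a_m<F_{n-1}$ you offer only a ``likely tool'' (comparing characteristic vectors of $T_n,T_{n-1},T_{n-2}$) with no argument; note that the representations $T_j$ are far from unique, and it is not at all evident that some choice of representations leaves an element of $A$ unused by all of $T_2,\ldots,T_{n-2}$. Since that case is exactly where the content of the theorem lies, the lower bound is unproven as written. The paper's route is different: it appends $F_{n+1},F_{n+2}$ and uses $F_{n+2}=1+F_1+\cdots+F_n>F_1+\cdots+F_n$ to argue that each added pair of elements forces one additional move. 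A clean way to close your gap in the same spirit: $CM$ is monotone under subsets (any $A$ covering $S$ covers every subset of $S$), and by the criterion quoted in Section 4.2, a set in which each element exceeds the sum of all smaller elements satisfies $CM=$ its size. For odd $n$ the subset $\{F_2,\ F_3,\ F_5,\ \ldots,\ F_n\}$ --- the same set underlying your $A$ --- is super-increasing precisely because of your identity ($F_2+F_3+\cdots+F_{2j-1}=F_{2j}<F_{2j+1}$) and has $\lceil n/2\rceil$ elements; for even $n$ use $\{F_2,\ F_4,\ \ldots,\ F_n\}$, super-increasing since $F_2+F_4+\cdots+F_{2j-2}=F_{2j-1}-1<F_{2j}$, with $n/2$ elements. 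Either way $CM(S)\geq\lceil n/2\rceil$ follows immediately, replacing the unresolved case of your induction.
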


\begin{proof}
The proof that such a set can be solved in $\frac{n}{2}$ moves follows from the nature of the Fibonacci sequence. At each step, two elements can be removed by subtracting the second largest element from the largest two, that is, subtracting $F_{i-1}$ from $F_{i-1}$ and $F_i$, which leaves 0 and $F_{i-2}$, respectively. This process can be repeated, until no elements remain (if $n$ is even) or one element remains (if $n$ is odd). For a proof of equality, consider the set $S = \{F_2,\ \ldots,\ F_n,\ F_{n+1},\ F_{n+2}\}$. For the Fibonacci numbers, $F_{n+2} = 1 + F_1 + F_2 + \ldots + F_n$, and so $F_{n+2} > F_1 + F_2 + \ldots + F_n$. Thus, at least one additional move is required to deal with $F_{n+2}$. Working backwards, the same can be said, and so every two elements will require one move.
\end{proof}

\section{The Cookie Monster Combinatorial Game}\label{game}

The Cookie Monster Problem can be altered to obtain a combinatorial game. In this game players alternate turns, choosing any subset of the jars and taking the same nonzero amount from each jar, with the aim of emptying the last jar. Though the premise is similar, the approach is fundamentally different as the aim is no longer necessarily to use the least number of moves---in fact, in many cases it is beneficial to lengthen the game. However, it can be determined who will win if both players play with perfect strategy by analyzing the losing positions of the game.

\subsection{Wythoff's Game}

A game with two jars has already been analyzed as ``Wythoff's Game'' \cite{wythoff}. The game is suggested as a variation of Nim, a game which consists of any number of jars where each move is to take some amount from any one jar. Here, instead of merely taking any amount from one jar the player can instead choose to take same amount from both jars. The losing positions can be defined as any combination of numbers $\{p,\ q\}$ where any move will lead to a winning position. Beginning with $p_1 = 1$, losing positions can be generated by taking $q_i = p_i + i$, and $p_i$ as the smallest integer which has not yet appeared in the set of losing positions \cite{wythoff}. The first ten Wythoff pairs are listed in Table \ref{pairs}.

\begin{table}[hbtp]
\begin{center}
\begin{tabular}{|ccc|} \hline
$i$ & $p_i$ & $q_i$ \\ \hline
1 & 1 & 2 \\
2 & 3 & 5 \\
3 & 4 & 7 \\
4 & 6 & 10 \\
5 & 8 & 13 \\
6 & 9 & 15 \\
7 & 11 & 18 \\
8 & 12 & 20 \\
9 & 14 & 23 \\
10 & 16 & 26 \\ \hline
\end{tabular}
\caption{The first ten Wythoff pairs}\label{pairs}
\end{center}
\end{table}

The construction of these pairs follows as such: beginning with $\{0,\ 0\}$, the base losing position as no moves can be made, the next losing position is seen to be $\{1,\ 2\}$ as this is the first position for which no move can reduce it to another losing position---namely, $\{0,\ 0\}$. Then, no other set can have the same difference between $p$ and $q$, that is, no other set can have $q_i - p_i = 1$, because this would be easily reducible to $\{1,\ 2\}$. So the next possible $p$ is taken to be 3, and the corresponding $q$ would be $p$ plus the smallest number which has not already appeared for some $q_i - p_i$, and the construction continues in this manner. This also shows that $\{p_i\}\sqcup\{q_i\} = \mathbb{N}$. It was later found \cite{wythoff} that these positions follow the equation $p_i = \lfloor i\phi \rfloor$ and $q_i = \lfloor i\phi^2 \rfloor$, where $\phi = \frac{1 + \sqrt{5}}{2}$.

\subsection{A Game With Three Jars}

The case of two jars can be generalized to a version of the game with three jars. For completeness, the Wythoff pairs will also be considered for this version in the form $\{0,\ p^0_i,\ q^0_i\}$. From here, sets of the form $\{1,\ p^1_i,\ q^1_i\}$ can be generated. However, the pattern is not as regular as the case of $\{0,\ p^0_i,\ q^0_i\}$. Although a similar method is used to generate the $p^1_i,q^1_i$ pairs, the existence of other losing positions of the form $\{0,\ p^0_i,\ q^0_i\}$ results in an irregular distribution of the difference $d_i = q^1_i - p^1_i$. Beginning with $\{1,\ 1,\ 4\}$ ($p^1_1 = 1, q^1_1 = 4$), the next $p^1_i$ is again the smallest positive integer which has not yet appeared in the set of $p^1_i,q^1_i$ pairs. However, 2 must be skipped in this determination as any set $\{1,\ 2,\ q^1_i\}$ or $\{1,\ p^1_i,\ 2\}$ would be reducible to the losing position $\{0,\ 1,\ 2\}$ in a single move by emptying all of the unknown jar. From here, the corresponding $q^1_i$ is not simply $p^1_i + i$. All possible differences $d_i$ (the set of positive integers $\geq 0$) will be used exactly once, but not necessarily in order as some may reduce easily to a set $\{0,\ p^1_i,\ q^1_i\}$. The first forty $p^1_i,q^1_i$ pairs along with their difference $d_i = q^1_i - p^1_i$ are given in Table \ref{xy}.

\begin{table}[hbtp]
\begin{center}
\begin{tabular}{|ccc|ccc|ccc|ccc|} \hline
$p^1_i$ & $q^1_i$ & $d_i$ & $p^1_i$ & $q^1_i$ & $d_i$ & $p^1_i$ & $q^1_i$ & $d_i$ & $p^1_i$ & $q^1_i$ & $d_i$ \\ \hline
1 & 4 & 3 & 18 & 27 & 9 & 34 & 53 & 19 & 50 & 79 & 29 \\
3 & 3 & 0 & 20 & 33 & 13 & 36 & 56 & 20 & 52 & 82 & 30 \\
5 & 6 & 1 & 21 & 32 & 11 & 37 & 59 & 22 & 54 & 88 & 34 \\
7 & 9 & 2 & 23 & 35 & 12 & 39 & 64 & 25 & 55 & 87 & 32 \\
8 & 12 & 4 & 24 & 38 & 14 & 40 & 63 & 23 & 57 & 90 & 33 \\
10 & 17 & 7 & 26 & 43 & 17 & 41 & 67 & 26 & 58 & 93 & 35 \\
11 & 16 & 5 & 28 & 46 & 18 & 42 & 66 & 24 & 60 & 98 & 38 \\
13 & 19 & 6 & 29 & 44 & 15 & 45 & 72 & 27 & 61 & 97 & 36 \\
14 & 22 & 8 & 30 & 51 & 21 & 48 & 76 & 28 & 62 & 101 & 39 \\
15 & 25 & 10 & 31 & 47 & 16 & 49 & 80 & 31 & 65 & 102 & 37 \\ \hline
\end{tabular}
\caption{The first forty $p^1_i,q^1_i$ pairs for $\{1,p^1_i,q^1_i\}$ losing positions}\label{xy}
\end{center}
\end{table}

The $p^1_i,q^1_i$ pairs have several similar properties to $p^0_i,q^0_i$ pairs. As mentioned above, every possible $d_i$ will appear exactly once. In addition, the disjoint union of $\{p^1_i\}$ and $\{q^1_i\}$ is very nearly the set of all natural numbers, much as the disjoint union of $\{p^0_i\}$ and $\{q^0_i\}$ is. However, it excludes the number 2. If $p^0_i$ and $p^1_i$ are graphed against $q^0_i$ and $q^1_i$, respectively, the graphs will follow almost identical patterns, though $p^1_i$ against $q^1_i$ has greater variation. Despite several correlations between $p^0_i,q^0_i$ pairs and $p^1_i,q^1_i$ pairs, the latter cannot be as easily matched to an equation as the former can.

\begin{conj}\label{slope}
On average, the graphs of $p^0_i$ against $q^0_i$ and $p^1_i$ against $q^1_i$ have the same slope, that is $\frac{q^0_i}{p^0_i} \approx \frac{q^1_i}{p^1_i}$.
\end{conj}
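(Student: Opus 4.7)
The plan is to prove the stronger statement $q^1_i/p^1_i \to \phi$, matching the limit $q^0_i/p^0_i \to \phi$ that follows immediately from the closed forms $p^0_i=\lfloor i\phi\rfloor$ and $q^0_i=\lfloor i\phi^2\rfloor$. The approach is to posit linear asymptotics $p^1_i \sim \alpha i$ and $q^1_i \sim \beta i$ for positive constants $\alpha<\beta$, and then pin down $\alpha,\beta$ from two density identities forced by the construction of the losing positions.

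First, the identity $\{p^1_i\}\sqcup\{q^1_i\}=\mathbb{N}\setminus\{2\}$ gives $|\{i:p^1_i\le n\}|+|\{i:q^1_i\le n\}|=n-1$ for large $n$. Under linear growth, these two counts equal $n/\alpha+o(n)$ and $n/\beta+o(n)$, so dividing by $n$ and sending $n\to\infty$ yields
$$\frac{1}{\alpha}+\frac{1}{\beta}=1.$$
Second, every nonnegative integer appears exactly once among the differences $d_i=q^1_i-p^1_i$, so $|\{i:d_i\le m\}|=m+1$ for all $m\ge 0$. Under the same linear-growth assumption this count is asymptotic to $m/(\beta-\alpha)$, forcing $\beta-\alpha=1$. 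The two equations combine into $\alpha^2-\alpha-1=0$, so $\alpha=\phi$, $\beta=\phi^2$, and hence $q^1_i/p^1_i\to\phi^2/\phi=\phi$, matching the Wythoff ratio exactly.

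The main obstacle is justifying the linear asymptotics. Unlike $p^0_i,q^0_i$, which are explicit Beatty sequences, the pairs $(p^1_i,q^1_i)$ are defined by a greedy construction that occasionally skips indices to avoid reduction to $\{0,p^0_j,q^0_j\}$ positions, and---as Table \ref{xy} illustrates---$q^1_i$ is not even monotone in $i$. A reasonable route is: first show that $p^1_i$ is strictly increasing and grows at a linear rate by bounding the number of integers at most $n$ excluded from $\{p^1_j\}$, using the fact that only a single integer ($2$) is ever missing from the combined union; next upgrade the rough bound on $q^1_i$, which lies between $p^1_i$ and the largest value produced through $i$ rounds, using the same counting input; and finally strengthen the permutation identity for the $d_i$ to a pointwise bound $d_i=O(i)$, which is the most delicate step since irregular orderings of the differences could in principle produce sub- or superlinear drift even while the overall count $|\{i:d_i\le m\}|=m+1$ is preserved. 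Making these counting bounds precise, likely by an induction on $i$ that tracks how often reducibility to a Wythoff pair forces $p^1_i$ or $q^1_i$ to shift, is the technical heart of the argument.
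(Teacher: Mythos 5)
The statement you are addressing is a conjecture in the paper: the authors give no proof, only empirical verification for the first 100 pairs, so there is no official argument to compare against. Your plan, if carried out, would establish something stronger than the conjecture (namely $q^1_i/p^1_i \to \phi$), but as written it has a genuine gap which you yourself flag: the linear asymptotics $p^1_i \sim \alpha i$ and $q^1_i \sim \beta i$ are posited, not proved, and everything else hinges on them. The two density identities only convert the \emph{existence} of these limits into the values $\alpha = \phi$, $\beta = \phi^2$; they do not supply existence. Without it, the counting facts are perfectly consistent with $p^1_i/i$ and $q^1_i/i$ oscillating, so not even the vaguer ``same slope on average'' statement follows. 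The route you sketch for closing the gap --- bounding the integers excluded from $\{p^1_j\}$, upgrading a crude bound on $q^1_i$, and proving a pointwise bound on $d_i$ --- is the entire difficulty, and you identify no mechanism for controlling how often reducibility to a position $\{0,\ p^0_j,\ q^0_j\}$ perturbs the greedy choice; this irregularity is exactly why the paper remarks that the $p^1_i, q^1_i$ pairs ``cannot be as easily matched to an equation'' as the Wythoff pairs.

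Two further cautions. The combinatorial inputs you treat as given --- that the differences $d_i$ exhaust the nonnegative integers exactly once, and that $\{p^1_i\} \cup \{q^1_i\}$ covers $\mathbb{N}\setminus\{2\}$ --- are themselves only observations in the paper, not proved facts; in the three-jar game a move may also alter or empty the jar containing $1$, so the standard Wythoff-style arguments must be redone before these identities can be used. (The ``at most once'' half for the differences is easy: two losing positions $\{1,\ p,\ q\}$ and $\{1,\ p',\ q'\}$ with $q-p = q'-p'$ and $p < p'$ would be connected by a legal move on the two larger jars, a contradiction. Surjectivity onto all of $\mathbb{Z}_{\geq 0}$ is not immediate.) Also, the union is not actually disjoint: Table \ref{xy} has $p^1_2 = q^1_2 = 3$, so your exact count is $n$ rather than $n-1$; this is harmless asymptotically but shows the structural claims need care. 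In short, reducing the conjecture to the existence of linear growth rates is a reasonable strategy and correctly isolates the hard step, but that step is not carried out, so the conjecture remains open under your proposal.
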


\begin{conj}\label{diff}
If the sets of all losing positions of the form $\{0,\ p^0_i,\ q^0_i\}$ and $\{1,\ p^1_i,\ q^1_i\}$, respectively, are arranged in order of increasing $p^0_i$ and $p^1_i$, there exist constant bounds on the values $p^1_i - p^0_i$ and $(q^1_i - p^1_i) - (q^0_i - p^0_i)$.
\end{conj}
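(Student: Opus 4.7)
The plan is to prove both bounds by simultaneous induction on $i$, maintaining the invariant $|p^1_j - p^0_j| \le C_1$ and $|(q^1_j - p^1_j) - (q^0_j - p^0_j)| \le C_2$ for all $j \le i$, with explicit constants extracted from the recursive construction. First I would spell out precisely the losing-position rule for $\{1, p^1_i, q^1_i\}$: every move must lead to a winning position, and relative to the pure Wythoff rule this imposes two extra obstructions beyond the analogous two-jar reductions, namely that (a) $\{p^1_i, q^1_i\}$ is not itself a Wythoff pair (the move emptying the singleton jar) and (b) $\{p^1_i - 1, q^1_i - 1\}$ is not a Wythoff pair (the move subtracting $1$ from all three jars). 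The greedy construction underlying Table \ref{xy} then chooses $p^1_{i+1}$ as the smallest unused positive integer ($2$ being permanently skipped) and $q^1_{i+1}$ so that $d^1_{i+1}$ is the smallest fresh nonnegative difference compatible with (a), (b), and the mex requirement.

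The Wythoff pairs $p^0_j = \lfloor j\phi \rfloor$, $q^0_j = \lfloor j\phi^2 \rfloor$ are distributed by Beatty's theorem with densities $1/\phi$ and $1/\phi^2$, so obstructions (a) and (b) rule out only a bounded number of candidate $(p,d)$ values in any window of bounded length. The key local lemma I would try to prove is that, between two consecutive obstructions of type (a) or (b), the sequence $p^1$ tracks $p^0$ exactly up to an additive constant, and each obstruction forces at most a bounded shift in $p^1$ together with a bounded shift in the difference $d^1$. The Beatty sequence's self-similar substitution structure, together with the fact that the complementary sequences $\{p^0_j\}$ and $\{q^0_j\}$ partition $\mathbb{N}$, should allow this local step to be carried out by finite case-checking on the local Fibonacci-morphic pattern around the current index.

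The main obstacle is preventing \emph{cumulative} drift: each time $p^1$ shifts relative to $p^0$, the reservoir of still-unused differences also shifts, and the greedy mex rule can, in principle, amplify small errors. To control this I would introduce a joint potential function $\Phi_i = \alpha(p^1_i - p^0_i) + \beta(d^1_i - d^0_i)$ for suitable $\alpha,\beta > 0$, and bound it by analyzing a finite automaton operating on the Zeckendorf (Fibonacci) representation of $i$, which is the natural numeration system for the Wythoff sequence. I expect this automaton analysis to be the crux; given the author's remark that the $(p^1_i, q^1_i)$ pairs resist any clean closed-form expression, any rigorous proof likely must proceed via bounded-state arguments of this type rather than by an explicit formula. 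As a preliminary sanity check, Table \ref{xy} already yields $|p^1_i - p^0_i| \le 2$ and $|d^1_i - d^0_i| \le 4$ for $i \le 40$, which provides concrete candidate values of $C_1$ and $C_2$ to drive the induction.
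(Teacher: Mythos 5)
There is nothing in the paper to compare your argument against: the statement you are addressing is stated as a \emph{conjecture}, and the paper offers no proof, only empirical verification for the first 100 pairs together with the remark that the extreme values of $p^1_i - p^0_i$ and $(q^1_i-p^1_i)-(q^0_i-p^0_i)$ actually grow once one passes the first 100 pairs. Your proposal is likewise not a proof but a program: the two steps that would constitute the actual mathematical content --- the ``key local lemma'' that $p^1$ tracks $p^0$ up to an additive constant between obstructions, and the control of cumulative drift via the potential $\Phi_i$ and a finite automaton on Zeckendorf representations --- are exactly the steps you leave unproved, and you yourself identify the drift control as the crux. Until that step is carried out, the induction has no content, because the greedy mex construction of the $\{1,p^1_i,q^1_i\}$ positions is precisely the mechanism that could amplify a shift indefinitely. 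Note also that your candidate constants $C_1=2$, $C_2=4$, read off from Table~\ref{xy}, are already contradicted by the paper's own computation: the bounds observed over the first 100 pairs are wider ($-1\le p^1_i-p^0_i\le 2$ but $-4\le (q^1_i-p^1_i)-(q^0_i-p^0_i)\le 3$), and beyond 100 pairs the extremes increase further, so an induction maintaining any \emph{specific} small constant will break at a finite, checkable index; at best one could hope to prove existence of some bound, not these values.

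There is also a concrete error in your setup of the losing-position condition. From $\{1,p,q\}$ the Cookie Monster game allows taking the same amount from \emph{any} subset of jars, so besides emptying the singleton jar (giving $\{0,p,q\}$) and subtracting $1$ from all three jars (giving $\{0,p-1,q-1\}$), one may subtract $1$ from the singleton together with exactly one of the large jars, reaching $\{0,p-1,q\}$ or $\{0,p,q-1\}$. Hence a losing position $\{1,p^1_i,q^1_i\}$ must avoid \emph{four} Wythoff-pair obstructions, namely $(p,q)$, $(p-1,q-1)$, $(p-1,q)$ and $(p,q-1)$, not the two you list. Since your local lemma and the bounded-shift accounting are supposed to be established by finite case-checking against exactly this obstruction set, omitting half of it would invalidate the case analysis even if the rest of the program were carried through. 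As it stands, the proposal should be regarded as a plausible strategy for attacking Conjecture~\ref{diff}, not as a proof, and the conjecture remains open in the paper as well.
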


Both of these conjectures hold for the first 100 $p^1_i,q^1_i$ pairs. Within the first 100 pairs, no difference $p^1_i - p^0_i$ is greater than 2 or less than $-1$ and no difference $(q^1_i - p^1_i) - (q^0_i - p^0_i)$ is greater than 3 or less than $-4$. However, with further $p^1_i,q^1_i$ pairs the maximum and minimum differences increase, though generally the differences remain close to 0.

\section{Conclusion}

For some sets $S$, $CM(S)$ can be easily determined. Sets where $|S| = 3$ have been fully classified, as have sets for which $CM(S) = 3$ for sets of size 3 (where any set for which $k_1 + k_2 = k_3$ does not hold has $CM(S) = 3$) up to size 7 (which is the largest set $S$ which can be a subset of $A^+$ if $|A| = 3$). In addition, arithmetic and geometric sequences and the Fibonacci sequence have been examined, and a version of the problem as a combinatorial game has been partially analyzed. Despite this work in classifying sets, few more general statements can be made for larger sets $S$ or $A$. Some of the properties for $CM(S) = 3$ may be able to be generalized, or other restrictions found. The combinatorial game could also be more fully analyzed. In addition, there is a possibility but no proof that this problem is NP-complete.

\section{Acknowledgments}

I would like to thank my mentor, Mr. Wuttisak Trongsiriwat, as well as Dr. Tanya Khovanova for their help in suggesting the problem and guiding my work. I would also like to thank my tutor, Dr. Jake Wildstrom, and the other RSI staff for helping me in various ways. Additionally, I would like to thank the Massachusetts Institute of Technology (MIT) for hosting the Research Science Institute. Finally, I would like to thank the Center for Excellence in Education and MIT Math Department as well as all of the sponsors of the Research Science Institute for organizing and giving me the opportunity to conduct this research.

\begin{singlespace}
 


\end{singlespace}

%

\appendix


\section{Sets Where $CM(S) = 3$}\label{sets}

What follows is a full list of both the $A$-level and corresponding $S$-level representations for all sets where $CM(S) = 3$ (see Section \ref{class}). Where multiple equations exist, a set $S$ must satisfy \emph{all} of the equations to match. Additionally, some sets have variations where elements of the $A$-level set are replaced by equivalent ones---$a_1,a_2,a_3$ can be replaced by any of the other two, and similarly with $a_1+a_2,a_1+a_3,a_2+a_3$ as long as each only appears at most once.

\begin{table}[htpb]
\begin{center}
\begin{tabular}{|c|c|} \hline
$A$-level set & $S$-level equation \\ \hline
$\{a_1,\ a_2,\ a_1+a_2,\ a_1+a_3\}$ & $k_1 + k_2 = k_3$ \\ \hline
$\{a_1,\ a_2,\ a_3,\ a_1+a_2+a_3\}$ & $k_1 + k_2 + k_3 = k_4$ \\ \hline
$\{a_1,\ a_3,\ a_1+a_2,\ a_2+a_3\}$ & $k_1 + k_4 = k_2 + k_3$ \\ \hline
$\{a_1,\ a_1+a_2,\ a_1+a_3,\ a_2+a_3\}$ & $2k_1 + k_4 = k_2 + k_3$ \\ \hline
$\{a_1+a_2,\ a_1+a_3,\ a_2+a_3,\ a_1+a_2+a_3\}$ & $k_1 + k_2 + k_3 = 2k_4$ \\ \hline
\end{tabular}
\caption{Sets and equations for $|S| = 4$}
\end{center}
\end{table}

\begin{table}[htpb]
\begin{center}
\begin{tabular}{|c|c|} \hline
$A$-level set & $S$-level equations \\ \hline
$\{a_1,\ a_2,\ a_3,\ a_1+a_3,\ a_2+a_3\}$ &
\begin{tabular}{c}
$k_1 + k_3 = k_4$ \\
$k_2 + k_3 = k_5$ \\
\end{tabular} \\ \hline
$\{a_1,\ a_2,\ a_3,\ a_1+a_3,\ a_1+a_2+a_3\}$ &
\begin{tabular}{c}
$k_1 + k_3 = k_4$ \\
$k_1 + k_2 + k_3 = k_5$ \\
\end{tabular} \\ \hline
$\{a_1,\ a_2,\ a_1+a_2,\ a_1+a_3,\ a_2+a_3\}$ &
\begin{tabular}{c}
$k_1 + k_2 = k_3$ \\
$k_2 + k_4 = k_1 + k_5$ \\
\end{tabular} \\ \hline
$\{a_1,\ a_2,\ a_1+a_2,\ a_1+a_3,\ a_1+a_2+a_3\}$ &
\begin{tabular}{c}
$k_1 + k_2 = k_3$ \\
$k_2 + k_4 = k_5$ \\
\end{tabular} \\ \hline
$\{a_1,\ a_1+a_2,\ a_1+a_3,\ a_2+a_3,\ a_1+a_2+a_3\}$ &
\begin{tabular}{c}
$k_1 + k_4 = k_5$ \\
$k_2 + k_3 = k_1 + k_5$ \\
\end{tabular} \\ \hline
$\{a_1,\ a_2,\ a_1+a_3,\ a_2+a_3,\ a_1+a_2+a_3\}$ &
\begin{tabular}{c}
$k_1 + k_4 = k_5$ \\
$k_2 + k_3 = k_5$ \\
\end{tabular} \\ \hline
\end{tabular}
\caption{Sets and equations for $|S| = 5$}
\end{center}
\end{table}

\begin{table}[htpb]
\begin{center}
\begin{tabular}{|c|c|} \hline
$A$-level set & $S$-level equations \\ \hline
$\{a_1,\ a_2,\ a_3,\ a_1+a_2,\ a_1+a_3,\ a_2+a_3\}$ &
\begin{tabular}{c}
$k_1 + k_2 = k_4$ \\
$k_1 + k_3 = k_5$ \\
$k_2 + k_3 = k_6$ \\
\end{tabular} \\ \hline
$\{a_1,\ a_2,\ a_3,\ a_1+a_3,\ a_2+a_3,\ a_1+a_2+a_3\}$ &
\begin{tabular}{c}
$k_1 + k_3 = k_4$ \\
$k_2 + k_3 = k_5$ \\
$k_1 + k_2 + k_3 = k_6$ \\
\end{tabular} \\ \hline
$\{a_1,\ a_2,\ a_1+a_2,\ a_1+a_3,\ a_2+a_3,\ a_1+a_2+a_3\}$ &
\begin{tabular}{c}
$k_1 + k_2 = k_3$ \\
$k_2 + k_4 = k_6$ \\
$k_1 + k_5 = k_6$ \\
\end{tabular} \\ \hline
\end{tabular}
\caption{Sets and equations for $|S| = 6$}
\end{center}
\end{table}

\begin{table}[htpb]
\begin{center}
\begin{tabular}{|c|c|} \hline
$A$-level set & $S$-level equations \\ \hline
$\{a_1,\ a_2,\ a_3,\ a_1+a_2,\ a_1+a_3,\ a_2+a_3,\ a_1+a_2+a_3\}$ &
\begin{tabular}{c}
$k_1 + k_2 = k_4$ \\
$k_1 + k_3 = k_5$ \\
$k_2 + k_3 = k_6$ \\
$k_1 + k_2 + k_3 = k_7$ \\
\end{tabular} \\ \hline
\end{tabular}
\caption{Sets and equations for $|S| = 7$}
\end{center}
\end{table}


\begin{thebibliography}{99}


\bibitem{quiz} P. Vaderlind, R.K. Guy, and L.C. Larson. \emph{The Inquisitive Problem Solver.} The Mathematical Association of America (2002).

\bibitem{lecture} M. Cavers. Cookie Monster Problem Notes. University of Calgary Discrete Math Seminar (2010).


\bibitem{blog} O. Bernardi and T. Khovanova. The Cookie Monster Problem. Available at \url{http://blog.tanyakhovanova.com/?p=325} (2011/06/24).

\bibitem{wythoff} V.E. Hoggatt, M. Bicknell-Johnson, and R. Sarsfield. A Generalization of Wythoff's Game. San Jose State University, San Jose, CA (1979).

\end{thebibliography}
\end{document}